\begin{document}

\title[Continuity of halo functions associated to density bases]{Continuity of halo functions associated to homothecy invariant density bases}

\author{Oleksandra Beznosova}
\address{Department of Mathematics, Baylor University, Waco, Texas 76798}
\email{oleksandra\!\hspace{.018in}\_\,beznosova@baylor.edu}

\author{Paul Hagelstein}
\address{Department of Mathematics, Baylor University, Waco, Texas 76798}
\email{paul\!\hspace{.018in}\_\,hagelstein@baylor.edu}
\thanks{This work was partially supported by a grant from the Simons Foundation (\#208831 to Paul Hagelstein).}



\subjclass[2010]{Primary 42B25}
\keywords{maximal functions}

\begin{abstract}
Let $\mathcal{B}$ be a collection of open sets in
$\mathbb{R}^{n}$ such that, for any $x \in \mathbb{R}^{n}$, there exists a set $U \in \mathcal{B}$ of arbitrarily small diameter \mbox{containing $x$.}  $\mathcal{B}$ is said to be a \emph{density basis} provided that, given a measurable set $A \subset \mathbb{R}^{n}$, for a.e. $x \in \mathbb{R}^{n}$ we have
$$\lim_{k \rightarrow \infty}\frac{1}{|R_{k}|}\int_{R_{k}}\chi_{A} = \chi_{A}(x)$$
holds for any sequence of sets $\{R_{k}\}$ in $\mathcal{B}$ containing $x$ whose diameters tend to 0. The geometric maximal operator
$M_{\mathcal{B}}$ associated to $\mathcal{B}$ is defined on $L^{1}(\mathbb{R}^n)$ by
$M_{\mathcal{B}}f(x) = \sup_{x \in R \in
\mathcal{B}}\frac{1}{|R|}\int_{R}|f|$.  The  \emph{halo function} $\phi$ of $\mathcal{B}$ is defined on $(1,\infty)$ by
$$\phi(u) = \sup \left\{\frac{1}{|A|}\left|\left\{x \in \mathbb{R}^{n} : M_{\mathcal{B}}\chi_{A}(x) > \frac{1}{u}\right\}\right| : 0 < |A| < \infty\right\}\;$$
and on $[0,1]$ by $\phi(u) = u$.
    It is shown that the halo function associated to any homothecy invariant density basis is a continuous function on $(1,\infty)$.  However, an example of a homothecy invariant density basis is provided such that the associated halo function is not continuous at 1.

\end{abstract}

\maketitle

\newcommand{\ind}{1\hspace{-2.3mm}{1}}

\section{Introduction}
\newtheorem{thm}{Theorem}

Let $\mathcal{B}$ be a collection of sets of positive measure in
$\mathbb{R}^{n}$.  Moreover, suppose that for each point $x$ in
$\mathbb{R}^{n}$ there exist members of $\mathcal{B}$ of
arbitrarily small diameter containing $x$.  A natural question to
pose is: for which functions $f$ do we have

\begin{equation}\label{1}
\liminf_{\substack{diam R \rightarrow 0 \\x \in R \in
\mathcal{B}}} \frac{1}{|R|}\int_{R}f = \limsup_{\substack{diam R
\rightarrow 0
\\x \in R \in \mathcal{B}}} \frac{1}{|R|}\int_{R}f = f(x) \;\;\;\;\textup{a. e.
?}
\end{equation}
Of course, the answer to this question largely depends on
$\mathcal{B}$ itself.  If $\mathcal{B}$ were, for instance, the
collection of cubes in $\mathbb{R}^{n}$, (\ref{1}) would hold for
any $f$ in $L^{1}(\mathbb{R}^{n})$.  If $\mathcal{B}$ were
the larger class consisting of all rectangular parallelepipeds in
$\mathbb{R}^{n}$ whose sides were parallel to the axes, (\ref{1})
would hold for all $f$ in $L(\log L)^{n-1}(\mathbb{R}^{n})$ but not hold for
some functions in $L(\log L)^{n-2}(\mathbb{R}^{n})$.  If $\mathcal{B}$ were
the collection of all rectangular parallelepipeds in
$\mathbb{R}^{n}$, (\ref{1}) would fail even for some functions in
$L^{\infty}(\mathbb{R}^{n})$.   (Proofs of these results may be found in \cite{G2}.)

If (\ref{1}) holds for every function lying in the class $L_{\Phi}$ of functions $f$ such that $\int_{\mathbb{R}^{n}}\Phi(|f|) < \infty$, then  $\mathcal{B}$ is said to \emph{differentiate the
integral of any} $f$ \emph{in} $L_{\Phi}(\mathbb{R}^{n})$, or, more colloquially,
differentiate $L_{\Phi}(\mathbb{R}^{n})$.  Whether or not $\mathcal{B}$
differentiates $L_{\Phi}(\mathbb{R}^{n})$ is closely linked to the behavior of the
associated geometric maximal operator $M_{\mathcal{B}}$, defined
by
$$
M_{\mathcal{B}} f(x) = \sup_{x \in R \in \mathcal{B}}
\frac{1}{|R|}\int_{R}|f|\;.
$$
It is rather easily shown that if $\Phi$ is a Young's function and $M_{\mathcal{B}}$ is of weak type $(\Phi, \Phi)$, i.e.
$$\left|\left\{x : M_{\mathcal{B}}f(x) > \alpha\right\}\right| \leq C \int \Phi\left(\frac{|f|}{\alpha}\right)\;,$$
 then
$\mathcal{B}$ differentiates $L_{\Phi} (\mathbb{R}^{n})$.  Moreover, as was
shown in \cite{Gu1}, if $\mathcal{B}$ is homothecy invariant the
converse also holds.  


A  boundedness property of a geometric maximal quite a bit weaker than a weak type $(\Phi, \Phi)$ estimate is a so-called \emph{Tauberian estimate}.  In particular, for a given $0 < \alpha < 1$ we say that the maximal operator $M_{\mathcal{B}}$ satisfies a Tauberian estimate with respect to $\alpha$ if
$$\left|\left\{x : M_{\mathcal{B}}\chi_{A}(x) > \alpha\right\}\right| \leq C |A|$$ holds for all measurable $A \in \mathbb{R}^{n}$, where the constant $C$ is independent of $A$.  It is important to appreciate here that $C$ does depend on $\alpha$ and can generally be expected to tend to infinity as $\alpha$ tends to 0.  The optimal $C$ with respect to $\frac{1}{u}$ for $u\in (1,\infty)$ is given by the \emph{halo function associated to $\mathcal{B}$} :
$$\phi(u) = \sup \left\{\frac{1}{|A|}\left|\left\{x \in \mathbb{R}^{n} : M_{\mathcal{B}}\chi_{A}(x) > \frac{1}{u}\right\}\right| : 0 < |A| < \infty\right\}\;.$$
Following the usual convention, we extend the halo function $\phi$ to $[0,1]$ by setting $\phi(u) = u$ for $u\in [0,1]$.

The halo function $\phi$ associated to a basis $\mathcal{B}$  provides considerable information regarding the differentiation properties of $\mathcal{B}$.   Busemann and Feller showed in \cite{busemannfeller1934} that, provided $\mathcal{B}$ is homothecy invariant, the finiteness of its halo function $\phi(u)$ for all $u$ in $[0,\infty)$ holds if and only if (1) holds for all $f \in L^{\infty}(\mathbb{R}^{n})$.    (A basis $\mathcal{B}$ satisfying such a condition is said to differentiate $f \in L^{\infty}(\mathbb{R}^{n})$ and is called a \emph{density basis}.)   Bounds on the growth of the halo function $\phi(u)$ are able to yield better differentiation properties.   For example, Soria showed in \cite{Soria85} that, if $\phi(u) \leq c_{0}u(1 + \log u)^{m}$ for some non-negative constants $m$, $c_{0}$, then $\mathcal{B}$ differentiates $L(\log^{+}L)^{m}\log^{+}\log^{+}L(\mathbb{R}^{n})$.   (Further estimates along these lines may be found in the subsequent paper \cite{sjso03} of Sj\"olin and Soria.)

Motivated by the known sharp weak type bounds of the Hardy-Littlewood and strong maximal operators, mathematicians working in the area of differentiation of integrals have long suspected the following:
\\

{\bf{The Halo Conjecture:}}
Let $\mathcal{B}$ be a homothecy invariant collection of open sets in $\mathbb{R}^{n}$ and let $\phi$ be the halo function associated to $\mathcal{B}$.   Then $\mathcal{B}$ differentiates $L_{\phi}(\mathbb{R}^{n})$.
\\

Although significant work on the Halo Conjecture has been done by, among others, de Guzm\'an, Hayes, Pauc, Sj\"olin, and Soria (see for instance \cite{G2, Gu1, Hayes66, HayesPauc55,  sjso03, Soria85}), the status of the Halo Conjecture appears to be far from resolution.
\\

One difficulty regarding the Halo Conjecture is a lack of general understanding regarding structural properties of halo functions.  Partial progress on this front was made recently by P. Hagelstein and A. Stokolos, who proved in \cite{HS} quantitative bounds on the growth of a class of halo functions that enabled them to ascertain that, provided $\mathcal{B}$ is a homothecy invariant basis of convex sets, finiteness of $\phi(u)$ for \emph{any} value of $u > 1$ was enough to imply that $\mathcal{B}$ differentiates $L^{p}(\mathbb{R}^{n})$ for sufficiently large $p$.  (This turns out to not only be of intrinsic interest but also have implications regarding $L^{p}$ bounds of certain multiplier operators in harmonic analysis, see in this regard \cite{Ba, CF, HS2}.)  However, many rather na\"ive questions regarding properties of halo functions remain unanswered.   The purpose of this paper is to address one of these; namely, the issue of continuity of halo functions.  In particular, we will show that, provided $\mathcal{B}$ is a homothecy invariant density basis, the halo function $\phi$ associated to $\mathcal{B}$ must be continuous on $(1,\infty)$.   However, we shall also see that the halo function associated to a homothecy invariant density basis need not be continuous at 1.  We will also indicate an application of the proof of the main result to issues involving semicontinuity of halo functions associated to bases of rectangles, as well as  suggest directions for further research in this area.

\section{Continuity of Halo Functions}

\begin{thm}
Let $\mathcal{B}$ be a homothecy invariant density basis consisting of open sets in $\mathbb{R}^{n}$, and let $\phi$ be the halo function associated to $\mathcal{B}$ defined on $(1,\infty)$ by
$$\phi(u) = \sup \left\{\frac{1}{|A|}\left|\left\{x \in \mathbb{R}^{n} : M_{\mathcal{B}}\chi_{A}(x) > \frac{1}{u}\right\}\right| : 0 < |A| < \infty\right\}\;.$$
Then $\phi$ is a continuous function on $(1,\infty)$.
\end{thm}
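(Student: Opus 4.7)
My plan is to establish continuity of $\phi$ on $(1,\infty)$ in three parts: monotonicity, left-continuity, and right-continuity, with the last being the main difficulty. Monotonicity is immediate since the super-level set $\{M_\mathcal{B}\chi_A > 1/u\}$ is non-decreasing in $u$. For left-continuity at $u_0 \in (1,\infty)$, fix any $A$ with $0 < |A| < \infty$, write $F_A(u) := |\{M_\mathcal{B}\chi_A > 1/u\}|/|A|$, and observe that by continuity of measure from below $F_A(u) \uparrow F_A(u_0)$ as $u \uparrow u_0$. Since the $F_A$ are monotone non-decreasing in $u$, the supremum over $A$ commutes with the monotone limit in $u$, yielding $\lim_{u \uparrow u_0}\phi(u) = \phi(u_0)$. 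Neither the homothecy invariance nor the density basis hypothesis is needed at this stage.

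The substantive content is right-continuity at $u_0 \in (1,\infty)$, which I would prove by contradiction. Suppose $\phi(u_0^+) \geq \phi(u_0) + 3\eta$ for some $\eta > 0$. Then for some $u_1 > u_0$ close to $u_0$ there exists $A$ with $F_A(u_1) > \phi(u_0) + 2\eta$. Writing $E := \{M_\mathcal{B}\chi_A > 1/u_1\}$ and $E_0 := \{M_\mathcal{B}\chi_A > 1/u_0\}$, I aim to construct an enhancement $A^* := A \cup B$ with $|B|/|A|$ small enough that $E \subseteq \{M_\mathcal{B}\chi_{A^*} > 1/u_0\}$ up to a null set; then $F_{A^*}(u_0) \geq |E|/|A^*|$ will still exceed $\phi(u_0)$, contradicting the definition of $\phi(u_0)$. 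For each $x \in E \setminus E_0$, the definition of $E$ supplies an open set $R_x \in \mathcal{B}$ containing $x$ with $1/u_1 < |R_x \cap A|/|R_x| \leq 1/u_0$. The key observation is that if $B \cap R_x$ sits inside $R_x \setminus A$ with relative measure slightly greater than $1/u_0 - 1/u_1$, then $|R_x \cap (A \cup B)|/|R_x| > 1/u_0$; the feasibility condition $1/u_0 - 1/u_1 < 1 - 1/u_0$ needed for such a $B \cap R_x$ to fit inside $R_x \setminus A$ holds precisely when $u_0 > 1$, in agreement with the paper's announced discontinuity at $u_0 = 1$.

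The technical heart of the proof is constructing $B$ globally with the required density inside every relevant $R_x$ yet with $|B|/|A|$ vanishing as $u_1 \downarrow u_0$. My plan is to use the homothecy invariance of $\mathcal{B}$ to normalize $|A| = 1$ and to work inside a bounded region containing $E$ of volume at most $\phi(u_1)|A|$; there I would install a very fine homothetic tiling-type pattern of relative density $\delta := 1/u_0 - 1/u_1 + \epsilon'$, chosen fine enough that every $R \in \mathcal{B}$ of diameter exceeding the tiling scale automatically satisfies $|R \cap B|/|R| \geq \delta - o(1)$. The density basis hypothesis ensures that the problematic $R_x$'s are indeed macroscopic: since $M_\mathcal{B}\chi_A = 1$ a.e.\ on $A$, almost every $x \in E \setminus E_0$ lies outside $A$, and the density of $A$ in small members of $\mathcal{B}$ containing such an $x$ vanishes, forcing $\mathrm{diam}(R_x)$ to be bounded below in terms of $\mathrm{dist}(x,A)$. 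This should yield $|B| \leq C\delta|A|$ for a constant depending on the basis, and combined with $|E|/|A| \geq \phi(u_0) + 2\eta$ gives $|E|/|A^*| > \phi(u_0)$ for $u_1$ sufficiently close to $u_0$ and $\epsilon'$ sufficiently small. The hardest part will be carrying out this construction rigorously in the absence of a Vitali-type covering lemma for a general homothecy invariant density basis; the density basis property and homothecy invariance must jointly substitute for such a lemma, with additional care needed to discard a small "boundary-adjacent" subset of $E$ where admissible $R_x$'s can still be arbitrarily small.
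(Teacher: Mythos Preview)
Your high-level plan for right-continuity---augment $A$ by a small set $B$ so that $\{M_{\mathcal{B}}\chi_A>1/u_1\}$ is absorbed into $\{M_{\mathcal{B}}\chi_{A\cup B}>1/u_0\}$---is precisely the paper's idea, and your treatment of monotonicity and left-continuity is correct (left-continuity really is free). The gap is in how you build $B$. Your tiling scheme needs every witnessing $R_x$ to have diameter above the tiling scale, and as you already suspect there is no such uniform lower bound: for $x$ near $A$ the admissible $R_x$ can be arbitrarily small. Your proposed fix, discarding a ``boundary-adjacent'' subset of $E$, is not justified; that subset can have measure comparable to $|E|$ (think of $A$ a union of many well-separated small balls), and the density-basis hypothesis gives you no control over it. Even for the surviving $x$'s there is a second problem: a tiling pattern $P$ of density $\delta$ in cubes need not have density $\approx\delta$ in $R_x\setminus A$, since $A$ can be an arbitrary measurable set.

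The paper sidesteps all of this with a simplification you are missing: reduce to \emph{finitely many} sets before building $B$. Since the $R$'s are open, $\{M_{\mathcal{B}}\chi_A>1/u_1\}$ is open, so one may pick $R_1,\dots,R_N\in\mathcal{B}$ with $|R_j\cap A|/|R_j|>1/u_1$ for each $j$ and $\bigl|\bigcup_{j=1}^N R_j\bigr|$ as close as desired to $\bigl|\{M_{\mathcal{B}}\chi_A>1/u_1\}\bigr|$. Now take $E'\subset\bigcup_j R_j\setminus A$ to have density \emph{exactly} $c\varepsilon:=\varepsilon/(1-1/u_0)$ in every atom of the finite Boolean algebra generated by $R_1\setminus A,\dots,R_N\setminus A$; this needs nothing more than the fact that a measurable set contains subsets of every smaller measure. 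Then $E'$ has density exactly $c\varepsilon$ in each $R_j\setminus A$, and a two-line computation gives $|R_j\cap(A\cup E')|/|R_j|>1/u_0$ for every $j$. The density-basis hypothesis enters only through the bound $\bigl|\bigcup_j R_j\bigr|\le\phi(2u_0)|A|<\infty$, which yields $|E'|\le c\varepsilon\,\phi(2u_0)|A|$ and hence the desired smallness of $|A\cup E'|/|A|-1$. No tiling, no macroscopicity argument, and no use of homothecy invariance beyond finiteness of $\phi$ is required.
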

\begin{proof}
We first observe that, since $\mathcal{B}$ is a homothecy invariant density basis, we are guaranteed that $\phi(u)$ is finite for every $u > 1$.  (A proof of this may be found in Chapter III of \cite{G2}.)

Let now $0 < \alpha < 1$.  Since $\phi(\frac{1}{\alpha})$ is a nonincreasing function of $\alpha$, it suffices to prove the following lemma, as together they rule out the possibility of a jump discontinuity in $\phi(\frac{1}{\alpha})$ as a function of $\alpha$.

\newtheorem{lem}{Lemma}
\begin{lem}
 Let $\mathcal{B}$ be a homothecy invariant density basis consisting of open sets in $\mathbb{R}^{n}$, and let $0 < \alpha < 1$.
\\

\noindent i)   Suppose for some finite constant $C$ we have
\begin{equation}\label{e2}
\left|\left\{x : M_{\mathcal{B}}\chi_{E}(x) > \alpha\right\}\right| \leq C|E|
\end{equation}
holds for all measurable sets $E$ in $\mathbb{R}^{n}$.  Then, for the same constant $C$, we have
\begin{equation}\label{e3}
\left|\left\{x : M_{\mathcal{B}}\chi_{E}(x) \geq \alpha\right\}\right| \leq C|E|
\end{equation}
holds for all measurable sets $E$ in $\mathbb{R}^{n}$.
\\

\noindent ii)   Suppose for some finite constant $C$ we have
\begin{equation}\label{e4}
\left|\left\{x : M_{\mathcal{B}}\chi_{E}(x) \geq \alpha\right\}\right| \leq C|E|
\end{equation}
holds for all measurable sets $E$ in $\mathbb{R}^{n}$.  Then for any $\epsilon > 0$, there exists $\delta > 0$ such that, for the same constant $C$, we have
\begin{equation}\label{e5}
\left|\left\{x : M_{\mathcal{B}}\chi_{E}(x) > \alpha - \delta \right\}\right| \leq (C + \epsilon)|E|
\end{equation}
holds for all measurable sets $E$ in $\mathbb{R}^{n}$.
\\
\end{lem}

\noindent {\it{Proof of Lemma 1.}}
\\

We first prove part (i) of the lemma.  Suppose (3) did not hold for all measurable sets $E$.  Then for some measurable set $E$ (which we now fix) we must have
$$
\left|\left\{x : M_{\mathcal{B}}\chi_{E}(x) \geq \alpha\right\}\right| > C|E|\;.$$

Let $\tilde{\epsilon} > 0$ be such that
$$
 \left|\left\{x : M_{\mathcal{B}}\chi_{E}(x) \geq \alpha\right\}\right| > (C + \tilde{\epsilon})|E|\;.$$
 Note for every $\epsilon > 0$ we have
 $$
 \left|\left\{x : M_{\mathcal{B}}\chi_{E}(x) > \alpha - \epsilon\right\}\right| > (C + \tilde{\epsilon})|E|\;.$$

Let now $0 < \epsilon < \min(\frac{\alpha}{2}, 1 - \alpha)$.  By Fatou's lemma there exists \mbox{$\left\{R_{j}\right\}_{j=1}^{N} \subset \mathcal{B}$} so that
 $$
 \frac{1}{|R_{j}|}\int_{R_{j}}\chi_{E} > \alpha - \epsilon
 $$
 holds for each $j$ and such that
 $$
 \left|\cup_{j=1}^{N}R_{j}\right| > \left(C + \frac{\tilde{\epsilon}}{2}\right)|E|\;.
 $$
   Since $\cup_{j=1}^{N}R_{j}$ is a finite union of open sets, there exists a measurable set $E' \subset \cup_{j=1}^{N}R_{j} - E$ satisfying
   $$
   \frac{1}{|R_{j} - E|}\int_{R_{j} - E}\chi_{E'} = \frac{1}{1 - \alpha}\epsilon
   $$
for each $j$. Let $\tilde{E} = E \cup E'$.  Setting $c = \frac{1}{1 - \alpha}$ we have that for each $R \in \{R_{j}\}$,
\begin{align}
\frac{1}{|R|}\int_{R}\chi_{\tilde{E}} &= \frac{1}{|R|}\left[ |E \cap R| + c\epsilon(|R| - |E\cap R|)\right] \notag
\\&= \frac{1}{|R|}\left[c\epsilon|R| + |E \cap R|(1 - c\epsilon)\right] \notag
\\&> \frac{1}{|R|}\left[c\epsilon|R| + (\alpha - \epsilon)|R|(1 - c\epsilon)\right] \notag
\\&\geq \alpha + \epsilon\left[c(1 - \alpha) - 1\right] \notag
\\&\geq \alpha . \notag
\end{align}

So $\frac{1}{|R|}\int_{R}\chi_{\tilde{E}} > \alpha$.  Note now that since $\phi(\frac{2}{\alpha}) < \infty$, there exists a finite constant $C_{\alpha / 2}$ such that $|\left\{x : M_{\mathcal{B}}\chi_{A}(x) > \frac{\alpha}{2}\right\}| \leq C_{\alpha / 2}|A|$ holds for all measurable $A$, and accordingly
\begin{align}
|\tilde{E}| &\leq |E| + c\epsilon\left|\cup_{j=1}^
{N}R_{j}\right| \notag
\\&\leq |E| + c\epsilon C_{\alpha / 2}|E|\notag
\end{align}
as $\cup R_{j} \subset \left\{x : M_{\mathcal{B}}\chi_{E}(x) > \alpha - \epsilon\right\} \subset \left\{x : M_{\mathcal{B}}\chi_{E}(x) > \frac{\alpha}{2}\right\}$.  Moreover
\begin{align}
\left|\left\{x : M_{\mathcal{B}}\chi_{\tilde{E}}(x) > \alpha \right\}\right| &\geq \left|\cup_{j=1}^{N}R_{j}\right| \notag
\\&> \left(C + \frac{\tilde{\epsilon}}{2}\right)|E| \notag
\\&\geq \left(C + \frac{\tilde{\epsilon}}{2}\right)\frac{|\tilde{E}|}{1 + c \epsilon C_{\alpha / 2}}\notag
\\&> \left(C + \frac{\tilde{\epsilon}}{4}\right) \left|\tilde{E}\right| \notag
\end{align}
for $\epsilon$ sufficiently small.  But then
$$
\left|\left\{x : M_{\mathcal{B}} \chi_{\tilde{E}} (x) > \alpha\right\}\right| \geq \left(C + \frac{\tilde{\epsilon}}{4}\right) \left|\tilde{E}\right|\;,
$$
contradicting (2).

The proof of (ii) follows along similar lines.  We proceed by contradiction.  Suppose (5) did not hold for all measurable sets $E$.  Then there would exist an $\epsilon > 0$ (which we now fix) such that, for any $\delta > 0$, there exists $E_{\delta}$ such that
$$
\left|\left\{x : M_{\mathcal{B}}\chi_{E_{\delta}}(x) > \alpha - \delta\right\}\right| > (C + \epsilon)|E_{\delta}|\;.$$

Let now $0 < \delta < \min(\frac{\alpha}{2}, 1 - \alpha)$ and $E_{\delta}$ the set associated to $\delta$ as above.  Let $\left\{R_{j}\right\}_{j=1}^{N} \subset \mathcal{B}$ be such that
$$
\frac{1}{|R_{j}|} \int_{R_{j}}\chi_{E_{\delta}} > \alpha - \delta
$$
and
$$
\left|\cup_{j=1}^{N}R_{j}\right| > \left(C + \frac{\epsilon}{2}\right)\left|E_{\delta}\right|\;.
$$
Let now $E_{\delta}' \subset \cup_{j=1}^{N}R_{j} - E_{\delta}$ be a measurable set satisfying
$$
\frac{1}{\left|R_{j} - E_{\delta}\right|}\int_{R_{j} - E_{\delta}} \chi_{E_{\delta}'} = \frac{1}{1-\alpha} \delta\;
$$
for each $j$.

Let  $c = \frac{1}{1 - \alpha}$, and let $\tilde{E_{\delta}} = E_{\delta} \cup E'_{\delta}$.  Observe that if $R \in \left\{R_{j}\right\}$ we have
\begin{align}
\frac{1}{|R|}\int_{R}\chi_{\tilde{E}_{\delta}} &\geq \frac{1}{|R|}\left[|E_{\delta} \cap R| + c\delta(|R| - |E_{\delta} \cap R|)\right] \notag
\\&= \frac{1}{|R|}[c\delta|R| + |E_{\delta}\cap R|(1 - c\delta)] \notag
\\&\geq \frac{1}{|R|}[c\delta|R| + (\alpha - \delta)|R|(1 - c\delta)] \notag
\\&\geq \alpha + [-\delta  + c\delta(1-\alpha)]\notag
\\&\geq \alpha\;.\notag
\end{align}
So $\frac{1}{|R|}\int_{R}\chi_{\tilde{E}_{\delta}} \geq \alpha$.  Note also
\begin{align}
\left|\tilde{E}_{\delta}\right| \leq |E_{\delta}| + c \delta |\cup_{j=1}^{N}R_{j}| \notag
\\ \leq |E_{\delta}| + c \delta C_{\alpha / 2}|E_{\delta}| \notag
\end{align}
as $\cup R_{j} \subset \left\{x : M_{\mathcal{B}}\chi_{E_{\delta}}(x) > \frac{\alpha}{2}\right\}$,
and
\begin{align}
\left|\left\{x : M_{\mathcal{B}}\chi_{\tilde{E}_{\delta}}(x) \geq \alpha\right\}\right| &\geq |\cup_{j=1}^{N}R_{j}| \notag
\\&> \left(C + \frac{\epsilon}{2}\right)|E_{\delta}|\notag
\\&> \left(C + \frac{\epsilon}{2}\right) \frac{|\tilde{E}_{\delta}|}{1 + c \delta C_{\alpha / 10}} \notag
\\&> \left(C + \frac{\epsilon}{4}\right)|\tilde{E}_{\delta}| \notag
\end{align}
 for $\delta > 0$ sufficiently small, contradicting (4) and completing the proof of the lemma and the theorem.
\end{proof}
\section{Further Remarks}

\noindent {\bf{1.}}\;  The above theorem regarding the continuity of halo functions associated to density bases finds the following nice application regarding the semicontinuity of a Tauberian condition associated to a homothecy invariant basis of rectangular parallelepipeds satisfying a Tauberian condition at a particular constant.

\begin{thm}
Let $\mathcal{B}$ be a homothecy invariant collection of rectangular parallelepipeds in $\mathbb{R}^{n}$.  Suppose for some $0 < \gamma < 1$ the maximal operator $M_{\mathcal{B}}$ satisfies the Tauberian condition
$|\left\{x : M_{\mathcal{B}}\chi_{E}(x) > \gamma\right\}| \leq C|E|$ for all measurable sets \mbox{$E \subset \mathbb{R}^{n}$.}  Then $M_{\mathcal{B}}$ moreover satisfies the inequality
$$|\left\{x : M_{\mathcal{B}}\chi_{E}(x) \geq \gamma\right\}| \leq C|E|$$ for all measurable sets $E$ in $\mathbb{R}^{n}$.
\end{thm}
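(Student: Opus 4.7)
The plan is to deduce Theorem 2 as a direct consequence of Lemma 1(i), once we promote the hypotheses of Theorem 2 to those of Lemma 1(i). The only gap between the two sets of hypotheses is that Lemma 1(i) assumes $\mathcal{B}$ is a density basis, whereas Theorem 2 assumes only that $\mathcal{B}$ is a homothecy invariant collection of rectangular parallelepipeds satisfying a Tauberian condition at the single level $\gamma$. The main task is therefore to show that these weaker hypotheses already force $\mathcal{B}$ to be a density basis.

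To carry out this promotion, I would appeal to the Hagelstein--Stokolos theorem of \cite{HS}, recalled in the introduction: for a homothecy invariant basis of convex sets, finiteness of the halo function $\phi(u)$ at even a single value $u>1$ is enough to ensure that $\mathcal{B}$ differentiates $L^{p}(\mathbb{R}^{n})$ for all sufficiently large finite $p$. Rectangular parallelepipeds are convex, and the Tauberian condition at level $\gamma$ is exactly the statement $\phi(1/\gamma)\leq C<\infty$, so the Hagelstein--Stokolos result applies. Since $L^{p}$-differentiation for some finite $p$ in particular yields almost everywhere differentiation of bounded compactly supported functions, $\mathcal{B}$ is a density basis. (As a byproduct, this also supplies the constants $C_{\alpha/2}$ used in the proof of Lemma 1(i) when $\alpha=\gamma$, since the halo function is now finite at every $u>1$.)

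With density basis status in hand, Lemma 1(i) applies verbatim with $\alpha=\gamma$ and the given constant $C$, and its conclusion (\ref{e3}) is precisely the inequality claimed in Theorem 2. The main obstacle is really the bookkeeping step of quoting \cite{HS} correctly to secure density basis status from the single-level Tauberian hypothesis on a basis of rectangular parallelepipeds; once this is done, the theorem follows as a mechanical corollary of the continuity machinery of Section 2.
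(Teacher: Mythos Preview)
Your proposal is correct and follows the same two-step strategy as the paper: first upgrade the single-level Tauberian hypothesis to density basis status via results from \cite{HS}, then apply Lemma~1(i) with $\alpha=\gamma$. The only difference is in \emph{which} statement from \cite{HS} is invoked. You cite the packaged theorem that a homothecy invariant basis of convex sets with $\phi(u_0)<\infty$ for some $u_0>1$ differentiates $L^p$ for large $p$, and then pass from $L^p$-differentiation to density basis status. The paper instead quotes the underlying halo-iteration lemma from \cite{HS} (with the explicit iteration bound $K_{\alpha,\gamma}$) and reproduces in a few lines the argument that this forces $\phi(u)<\infty$ for every $u>1$, hence $\mathcal{B}$ is a density basis by Busemann--Feller. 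Both routes are valid and amount to the same content; the paper's version is more self-contained and explicitly quantitative, while yours is shorter by black-boxing the conclusion of \cite{HS}.
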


\begin{proof}
Let $E$ be a measurable set in $\mathbb{R}^{n}$.  We inductively
 define $\mathcal{H}^{k}_{\mathcal{B}, \gamma}(E)$ for $k = 0,1,2,\ldots$ by setting
  $\mathcal{H}^{0}_{\mathcal{B},\gamma}(E) = E$
  and
\[\mathcal{H}^{k}_{\mathcal{B},\gamma}(E) = \left\{x :
 M_{\mathcal{B}}\chi_{\mathcal{H}^{k-1}_{\mathcal{B},\gamma}(E)}(x) \geq \gamma \right\}
 \]
for $k \geq 1$.

Define $\tilde{\gamma}$ by $\tilde{\gamma} = \gamma + \frac{1}{2}(1 - \gamma)$.  Note $0 < \gamma < \tilde{\gamma} < 1$ and that
$\mathcal{H}_{\mathcal{B}, \tilde{\gamma}}^{1}(A) \subset \left\{x : M_{\mathcal{B}}\chi_{A}(x) > \gamma\right\}$ holds for all measurable $A \subset \mathbb{R}^{n}$.  By the Tauberian condition on $M_{\mathcal{B}}$ we also then have that
$$
\left|\mathcal{H}_{\mathcal{B}, \tilde{\gamma}}^{1}(A)\right| \leq C|A|
$$
holds for all measurable $A \subset \mathbb{R}^{n}$.

Now, by a lemma of Hagelstein and Stokolos in \cite{HS}, we have that
if $R \in \mathcal{B}$ and \mbox{$\frac{1}{|R|}\int_{R}\chi_E =
\alpha < \gamma$,} then $R \subset \mathcal{H}^{K_{\alpha,
\gamma}}_{\mathcal{B},\gamma}(E)$ for some constant $K_{\alpha,
\gamma}$ depending only on $n$, $\alpha$, and $\gamma$, with in particular

\begin{equation}
K_{\alpha, \gamma} =
\left\lceil\frac{-\log(\frac{\gamma}{\alpha})}{\log
\gamma}\right\rceil \cdot \left\lceil 2 + \frac{ \log^{+}(\gamma
\cdot 2^n)}{\log(\frac{1}{\gamma})}\right\rceil + 1.
\end{equation}
This implies  that $\mathcal{B}$ forms a density basis.   To see this, let $0 < \alpha < \gamma$ and let $E$ be a measurable set in $\mathbb{R}^{n}$.  Let $R$ be a member of $\mathcal{B}$ such that $\frac{1}{|R|}\int_{R}\chi_{E} > \alpha$.
Then $R \subset \mathcal{H}^{K_{\alpha,\tilde{\gamma}}}_{\mathcal{B},\tilde{\gamma}}(E)$ and in particular
\begin{align}
\left|\left\{x : M_{\mathcal{B}}\chi_{E}(x) > \alpha\right\}\right| &\leq \left|\mathcal{H}^{K_{\alpha,\tilde{\gamma}}}_{\mathcal{B},\tilde{\gamma}}(E)\right|\notag
\\&\leq C \left|\mathcal{H}^{K_{\alpha,\tilde{\gamma}} - 1}_{\mathcal{B},\tilde{\gamma}}(E)\right| \notag
\\&\leq ... \leq C^{K_{\alpha,\tilde{\gamma}}}|E|\;. \notag
\end{align}
Accordingly we have $\left|\left\{x : M_{\mathcal{B}} \chi_{E}(x) > \alpha\right\}\right| \leq C^{K_{\alpha,\tilde{\gamma}}}|E|$, and hence that $\mathcal{B}$ is a density basis.  By the lemma above, the desired result follows.
\end{proof}

\noindent{\bf{2.}}\;    We remark that the statement of the above theorem was used in the proof of Proposition 1 of \cite{HS} without explicit justification; we are pleased to have provided it here.  Hagelstein and Stokolos thank Teresa Luque for bringing this issue to their attention.

A closely related and open problem is the following:
 \\

\noindent {\it{Problem:}}\; Suppose $\mathcal{B}$ is a collection of open sets in $\mathbb{R}^{n}$ (not necessarily forming a density basis) and the associated maximal operator $M_{\mathcal{B}}$ satisfies the Tauberian condition $|\left\{x : M_{\mathcal{B}}\chi_{E}(x) > \gamma\right\}| \leq C|E|$ for all measurable sets \mbox{$E \subset \mathbb{R}^{n}$.}  Must $M_{\mathcal{B}}$ satisfy the inequality
$$|\left\{x : M_{\mathcal{B}}\chi_{E}(x) \geq \gamma\right\}| \leq C|E|$$ for all measurable sets $E$ in $\mathbb{R}^{n}$?
\\

\noindent {\bf{3.}}\;  Although the halo function of a density basis is defined on $[0,\infty)$ and has been seen to be continuous on $(1,\infty)$, it is not necessarily continuous at 1, as is seen by the following example:
\\

\noindent{\it{Example:}}  Let $\mathcal{B}$ consist of all homothecies of sets in $\mathbb{R}$ of the form
$$((0,1) \cup (x, x+ \epsilon))\cap(0,2)$$ where $0 < \epsilon < 1$.  $M_{\mathcal{B}}$ is  dominated by twice the Hardy-Littlewood maximal operator and hence is bounded on $L^{p}(\mathbb{R})$ for $1 < p \leq \infty$.   Thus $\mathcal{B}$ forms a density basis. Observe however that $M_{\mathcal{B}}\chi_{(0,1)} = 1$ on $(0,2)$ and hence $\lim_{u \rightarrow 1^{+}} \phi(u) \geq 2$, so that $\phi(u)$ is discontinuous at $1$.

Of course, the collection $\mathcal{B}$ does not consist solely of convex sets, suggesting the following problem:
\\

\noindent {\it{Problem:}}\;  Let $\mathcal{B}$ be a homothecy invariant density basis of convex sets in $\mathbb{R}^{n}$, and let $\phi(u)$ be the halo function of the associated maximal operator $M_{\mathcal{B}}$.  Must $\phi(u)$ be continuous at 1?
\\

A. A. Solyanik proved in \cite{solyanik} that the halo functions of both the Hardy-Littlewood and strong maximal operators are indeed continuous at 1.  We wish to thank A. Stokolos for informing us of this result.

\end{document}